 \theoremstyle{plain}
 \newtheorem{thm}{Theorem}[section]
 \numberwithin{equation}{section} %% Comment out for sequentially-numbered
 \numberwithin{figure}{section} %% Comment out for sequentially-numbered
 \theoremstyle{plain}
 \theoremstyle{plain}
 \newtheorem{algorithm}[thm]{Algorithm} %%Delete [thm] to re-start numbering
 \theoremstyle{plain}
 \newtheorem{theorem}[thm]{Theorem}
 \theoremstyle{plain}
\theoremstyle{plain}
 \newtheorem{remark}[thm]{Remark}
 \theoremstyle{plain}
\def\P{{\mathcal{P}}}
\def\T{{\mathcal{T}}}
\def\M{{\mathcal{M}}}
\def\Forall{\quad \hbox{ for all }}
\def\curl{\operatorname{curl}}
\def\div{\operatorname{div}}
\def\<{{\langle}}
\def\>{{\rangle}}
\def\bu{{\bf u}}
\def\bv{{\bf v}}
\def\bVh{{\mathbf V_h}}
\def\bvh{{\mathbf v_h}}
\def\bv{{\mathbf v}}
\def\bu{{\mathbf u}}
\def\bw{{\mathbf w}}
\def\bh{{\mathbf h}}
\def\bbf{{\mathbf f}}
\def\bV{{\mathbf V}}
\def\du#1#2#3{\overset{#3}{\underset{#2}{#1}}}
\def\curl{\operatorname{curl}}
\def\div{\operatorname{div}}
\begin{document}
\bibliographystyle{plain}  
\title[Cascadic Multilevel  Algorithms for Solving Symmetric SPS]
{Cascadic Multilevel  Algorithms for Symmetric Saddle Point Systems}

\author{Constantin Bacuta}
\address{University of Delaware,
Department of Mathematics,
501 Ewing Hall 19716}
\email{bacuta@math.udel.edu}

%\author{Lu Shu}
%\address{University of Delaware,
%Department of Mathematics,
%501 Ewing Hall 19716}
%\email{shu@math.udel.edu}

\keywords{Uzawa algorithms, Uzawa gradient, Uzawa conjugate gradient, symmetric saddle point system, multilevel methods, cascadic algorithm, cascade principle}
\subjclass[2000]{74S05, 74B05, 65N22, 65N55}
%\thanks{The  work was partially supported  by NSF, DMS-0713125 and the University of Delaware}%.%E-mail address: bacuta@math.udel.edu}

\begin{abstract} 
In this paper, we introduce a  multilevel   algorithm for approximating variational formulations of symmetric saddle point systems. The algorithm is  based on availability of  families  of  stable  finite element pairs and  on the availability of fast and accurate solvers for {\it symmetric positive definite systems}. 
On each  fixed level  an efficient solver such as the gradient  or the conjugate gradient algorithm for inverting a Schur complement  is implemented.   The level change criterion follows the cascade principle and requires that   the iteration error be close to the expected discretization error. We prove  new estimates   that  relate the  iteration error and the    residual for the constraint equation. The new estimates  are the key ingredients  in  imposing  an efficient  level change criterion. The first iteration on each new level uses information about  the best approximation of the discrete solution from the previous level. The theoretical results and experiments show that the algorithms achieve optimal or close to optimal  approximation rates by performing a  non-increasing  number of  iterations on each level. 
Even though numerical results supporting the efficiency of the algorithms are presented for the Stokes system, the algorithms can be applied to a large class of boundary value problems, including first order systems  that can be reformulated at the continuous level as  symmetric saddle point problems, such as the  Maxwell equations. 
\end{abstract}
\maketitle

%%%%%%%%%%%%%%%%%%%%%%%%%%%%%%%%%%%%%%%%%%%%%%%%%%%%%%%%%
%111
%%%%%%%%%%%%%%%%%%%%%%%%%%%%%%%%%%%%%%%%%%%%%%%%%%%%%%%%%
\section{Introduction}\label{section: introduction}
%%%%%%%%%%%%%%%%%%%%%%%%%%%%%%%%%%%%%%%%%%%%%%%%%%%%%%%%

The  {\it cascade principle}  for  elliptic partial differential equations (PDEs)  was introduced  by  Deuflhard, Leinen and Yserentant in \cite{DeuflhardLeinenYserentant}. The main advantage of cascadic methods is that the iteration on each level is terminated as soon as the algebraic error is below the truncation or discretization error.  Shaidurov \cite{Shaidurov96} introduced a cascadic conjugate gradient and proved optimality in the energy norm for elliptic problems in two dimensions. The results were extended  by Bornemmann  and Deuflhard to the three dimensional problem in \cite{BornemannDeuflhard96}. 

In this paper we adopt the {\it cascade principle} in the context of  multilevel discretization of symmetric and coercive   saddle point  (SP)  systems. We let $ \bV$ and $Q$  be Hilbert  spaces,  and assume that $a(\cdot, \cdot) $ is a symmetric bounded and coercive bilinear form on  $\bV\times \bV$ that defines also the inner product on $\bV$, and that $b(\cdot, \cdot) $ is a continuous bounded bilinear form on $ \bV \times Q$  satisfying a continuous (LBB) or inf-sup condition. We denote  the  inner product  on $Q$ by $(\cdot , \cdot)$, and assume that the data ${\bf f}$ and $g$ belong to the dual spaces  $\bV^*$ and  $Q^*$, respectively.  We consider the variational problem: Find $(\bu, p)  \in \bV \times Q$ such that

\begin{equation}\label{abstract:variational}
\begin{array}{lcll}
a(\bu,\bv) & + & b( \bv, p) &= <{\bf f},\bv>, \ \ \Forall \bv \in \bV,\\
b(\bu,q) & & & =<g,q> , \  \  \Forall q \in Q
\end{array} 
\end{equation}

There is a broad literature on the multilevel finite element discretization for \eqref{abstract:variational},  see  \cite{B06, B09, BrambleZhang, uzawa1, uzawa2, uzawa3, bpvscale, elman-golub, benzi-golub-liesen, vassilevski-wang,  XuThesis, XuSiamReview}.   More recent work in multilvel  approximation of  variational formulations of  saddle point type systems  can be found   in  \cite{B09, bansch-morin-nochetto, dahlke-dahmen-urban, kondratyuk-stevenson}. 
A {\it cascadic approach} for discretizing \eqref{abstract:variational} was done 
by Braess, Dahmen and Sarazin  for  the Stokes systems in   \cite{BraessSarazin95, BraessDahmen99}.

In this paper, we present a  general   {\it Cascadic Multilevel}   (CM) algorithm  for solving the problem \eqref{abstract:variational}.  We start by   assuming that  a  sequence of pairs $\{({\bV_k}, {\M_k})\}_{k \geq 1}$  that satisfies  a discrete $inf-sup$ condition for every $k\geq 1$, and a sequence of  {prolongation} operators  ${\P_{k,k+1}} : {\M_k} \to {\M_{k+1}}$   are available. 

\begin{algorithm} CM Algorithm\label{alg:CM} \ \ \\
\begin{itemize}
 \item  {\bf Set} { $j=1,  k=1$}, \  $ \textcolor{magenta}{\bu_0 =0\in \bV_{1}} $,  and  $\textcolor{blue}{p_0\in \M_1}$.
 \smallskip
\item {\bf Step CM1}:  Solve for $\textcolor{magenta}{\bu_j  \in  \bV_k}$  and $ \textcolor{blue}{ q_j \in \M_k}$:  
\[
\begin{aligned}
&  a(\textcolor{magenta}{\bu_j}, \bv) & =  &<f,\bv> - b(\bv, p_{j-1}), &\Forall & \bv  \in \textcolor{magenta}{\bV_{k}},\\
\smallskip
&( {\textcolor{blue}{q_j}},q)  & = & b(\textcolor{magenta}{\bu_j} ,q) -<g, q>, &\Forall &  \textcolor{blue}{ q\in \M_k}. 
 \end{aligned}
 \]
\item {\bf Step CM2}:  Compute $(\textcolor{magenta}{\bu_{j+1}},  \textcolor{blue}{ p_j})$ from by  $(\textcolor{magenta}{\bu_{j}},  \textcolor{blue}{ p_{j-1}})$ by a \\  \textcolor{red}{process} on  $(\textcolor{magenta}{\bV_k}, 
\textcolor{blue}{\M_k})$.\\
 \smallskip 
 
\item   Check a {\bf level  change}  condition \textcolor{magenta} {\bf (LC)}.\\  
 \smallskip
 
\item {\bf Step CM3}:  \textcolor{red}{Repeat} {\bf  CM2} with  \textcolor{red}{$j \to j+1$}  \\  \textcolor{blue}{until}  \textcolor{magenta} {\bf (LC)}   \textcolor{red}{ is satisfied}.
\smallskip

\item {} {\it Define}   $ \textcolor{blue}{p^{(k+1)}_{0}}:=  \textcolor{blue}{\P_{k,k+1}} (\textcolor{blue}{p_{j}})$,   {\it increase the level  $(k \to k+1)$}. \\
Increase   \textcolor{red}{$j \to j+1$}  and  {\bf Go To} {\bf  CM1} with  $p_{j-1}=\textcolor{blue}{p^{(k+1)}_{0}}$. 
  
\end{itemize}
\end{algorithm}
 The algorithm is quite general, and  we will consider  the cases when   the process of   {\bf Step CM2} is executed  by   Uzawa (U), Uzawa Gradient  (UG)  or Uzawa Conjugate Gradient  (UCG) ``one step''  iteration. This means that our  proposed CM  algorithm is a Schur complement type  process. The main computational challenge  for a typical {\it  one step}  iteration is to invert  the discrete operator  $A_k$  associated with  the form $a(\cdot, \cdot)$ on $\M_k$. 
 When a fixed  level iteration ends due to the level change criterion (LC), only $p_{j}$ needs to be prolongated to the next  space  $\M_{k+1}$. Thus, the CM algorithm  is a  {\it simple to implement}  iterative process. On the other hand,   this algorithm  is build on the premise  that the action of  $A_k^{-1}$ is fast and exact.

Some other multilevel  approaches for solving \eqref{abstract:variational} that are related with the proposed CM algorithm are as follows. In \cite{verfurtth84}, Verf{\"u}rth uses an inexact conjugate gradient algorithm on  a single fine level where the inexact elliptic  process is provided by a multigrid algorithm that requires a multilevel structure. Level wise, our  CM algorithm  moves always upwards and an exact elliptic  solver is called  at each iteration. 
In \cite{BraessSarazin95},  Braess and Sarazin, develop a multigrid algorithm for discretizing the Stokes system that 
is based on a smoother acting on the residual of the global system. In \cite{BraessDahmen99},  Braess and Dahmen provide sharp estimates of a cascadic approach for the Stokes problem  that uses  the smoothing procedure proposed in  \cite{BraessSarazin95}.  
We emphasize that, according to the terminology of  
\cite{BraessSarazin95},  the  Braess-Sarazin-Dahmen approach is $\bu$-dominated, i.e., $(\bu_{j+1}, p_{j})$, mainly depends on $\bu_j$, while  our proposed CM algorithm  is $p$ dominated, i.e., $(\bu_{j+1}, p_{j})$ mainly  depends  on  $p_{j-1}$. 

In \cite{BacShu13, BM12}, we investigated similar  multilevel algorithms based on the inexact Uzawa algorithms at the continuous level and on inexact processes for approximating continuous residuals.  When  the inexact process  acting on residuals  of the first equation is a standard  Galerkin projection, the algorithms proposed in \cite{BacShu13, BM12} become  particular versions  of the proposed CM algorithm. Nevertheless,  the level change criterion we propose, and the choice of stable families   of approximation spaces we use  in this paper,  lead to a different  type of  CM algorithm.  

One  novelty  of the  CM  algorithm  we propose in Section \ref{section:CM},  is the level change condition that takes full advantage of  the maximum expected  order of the discretization error. We find an iteration error estimator  that is easy to compute and works well with all tree (U, UG and UCG)  choices of iterative processes.   
For the non-convex domains, where the full regularity of the solution might be lost,  we consider special discrete spaces based on  graded meshes, and using the appropriate level change condition we are  able to recover optimal or close to optimal rates of approximation for the continuous solution. 

 The rest of the  paper is organized as follows. In Section \ref{section:notation}, we introduce the needed notation for building the theory and the analysis for the  CM algorithm. In Section \ref{section:UGC}, we review  the Uzawa, UG and the UCG algorithms and find a  sharp error  estimator for the iteration error.   
 In Section \ref{section:CM}, we  specify  the (LC) condition and concrete  level solvers in order to define  implementable CM algorithms.  In Section \ref{section:NR-Stokes}, we present  the performance of a few  versions of CM algorithms  for different choices of solvers and discrete spaces for approximating the solution of the Stokes system. We summarize our conclusions in Section \ref{section:conclusion}. 
 
%%%%%%%%%%%%%%%%%%%%%%%%%%%%%%%%%%%%%%%%%%%%%%%%%%%%%%%%%
%222
%%%%%%%%%%%%%%%%%%%%%%%%%%%%%%%%%%%%%%%%%%%%%%%%%%%%%%%%%
\section{General Framework and  Notation }\label{section:notation}
%%%%%%%%%%%%%%%%%%%%%%%%%%%%%%%%%%%%%%%%%%%%%%%%%%%%%%%%%

We consider the standard notation for the saddle point problem (SPP) abstract framework. We  let  $\bV$ and  $Q$ be two Hilbert spaces with  inner products $a(\cdot, \cdot)$ and 
$(\cdot, \cdot)$ respectively,  with the corresponding induced norms $|\cdot|_{\bV} =|\cdot| =a(\cdot, \cdot)^{1/2}$ 
and $\|\cdot\|_Q=\|\cdot\| =(\cdot, \cdot)^{1/2}$. The dual pairings on $\bV^* \times \bV$ and 
$Q^* \times Q$ are denoted by $\langle \cdot, \cdot \rangle$. Here, 
$\bV^* $ and $Q^*$ denote the duals of $\bV$ and $Q$, respectively.  We assume that $ b(\cdot, \cdot) $ is  a bilinear form  on   $\bV \times Q $,  satisfying the following   conditions.  
 \begin{equation}
\label{inf-sup_a}
\du{\inf}{p \in Q}{} \ \du {\sup} {\bv \in \bV}{} \ \frac {b(\bv, p)}{\|p\| \ 
|\bv|} =m>0, \ \ \text{and} \ \ \du{\sup}{p \in Q}{} \ \du {\sup} {\bv \in \bV}{} \ \frac {b(\bv, p)}{\|p\| \ |\bv|} =M <\infty.
\end{equation} 

For ${\bf f} \in \bV^*$, $g\in Q^*$ we consider the variational problem \eqref{abstract:variational}.
It is known that the  variational problem  \eqref{abstract:variational} has a unique solution $(\bu,p)$ 
for any ${\bf f} \in \bV^*$, $g\in Q^*$, see \cite{brenner-scott, brezzi-fortin, girault-raviart, quarteroni-valli-94, ern-guermond, B09}. 

For the SP discretization, we  let  $\bVh \subset \bV, \ \ \M_h \subset Q$ and assume that 
 \begin{equation}
\label{inf-sup-mh}
\du{\inf}{p_h \in \M_h}{} \ \du {\sup} {\bvh \in \bVh}{} \ \frac {b(\bvh, p_h)}{\|p_h\| \ 
|\bvh|} =m_h>0, 
\end{equation} 
and define the  constant $M_h$ as
\begin{equation}
\label{sup-sup-m}
M_h:=\du{\sup}{p_h \in \M_h}{} \ \du {\sup} {\bvh \in \bVh}{} \ \frac {b(\bvh, p_h)}{\|p_h\| \ 
|\bvh|} \leq M.
\end{equation}

Let the discrete operators $A_h:\bVh\to \bVh$ and $B_h:\bVh \to \M_h$ be  defined by
\[
\begin{array}{lcll}
((A_h \bu_h,\bvh))&=&a(\bu_h, \bvh)      &\Forall \bu_h,\bvh \in \bVh, \\
(B_h \bu_h, q_h )&=&  ((\bu_h, B^T q_h))= b(\bu_h, q_h)  &\Forall \bu \in \bVh, q_h \in \M_h. 
\end{array}
\]
where $((\cdot , \cdot))$ is an  inner product on  $ \bV_h \times \bV_h$, that is usually associated with a basis on $\bV_h$. 
 The discrete version of \eqref{abstract:variational} is: \\
Find $(\bu_h, p_h)  \in \bVh \times \M_h$ such that
\begin{equation}\label{eq:a-v-h}
\begin{array}{lcl}
a(\bu_h,\bvh)  +  b( \bvh, p_h) &= &(({\bf f_h},\bvh )) \ \Forall  \bvh \in \bVh,\\
b( \bu_h,q) & =& (g_h,  q_h),  \ \Forall  q_h \in \M_h, 
\end{array}
\end{equation}
where ${\bf f_h} \in \bV_h$ and $g_h \in \M_h$ are defined by 
\begin{equation}\label{eq:f-g-h}
(({\bf f_h},\bvh )) = \<{\bf f_h},\bvh \>, \ \bvh \in \bVh,  \ (g_h, q_h) = \<g_h, q_h\>, \  q_h \in \M_h.
\end{equation}
The matrix or opperatorial form of \eqref{eq:a-v-h} is: 
\begin{equation}\label{eq:OP-h}
\begin{array}{lcl}
A_h \bu_h   + B_h^T  p_h &= & {\bf f_h},\\
B_h \bu_h  & =& g_h.
\end{array}
\end{equation}
 
It is well known from  \cite{braess, brenner-scott, FJS-inf-sup, xu-zikatanov-BBtheory} that, under the assumption \eqref{inf-sup-mh},   the problem \eqref{eq:a-v-h} has a unique solution  $(\bu_h, p_h)$   and 

 \[
 |\bu -\bu_h| + \|p-p_h\| \leq  C(m_0, M)\, \left (
 \inf_{\bv_h \in \bV_h}   |\bu -\bv_h|    +   \inf_{q_h \in \M_h} \|p-q_h\| \right ), 
\]  
where $(\bu, p)$ is the   solution  of the continuous problem \eqref{abstract:variational}.

Let $S_h:\M_h \to \M_h$, be the discrete Schur complement  defined by $S_h:= B_h A_h^{-1} B_h^T$. 
It is easy to check that $S_h$ is a symmetric and positive definite operator on $\M_h$. We have that $(\cdot ,\cdot)_{S_h}:=(S_h\cdot ,\cdot)$ is another inner product on $\M_h$ with the induced normed denoted by $\|\cdot\|_{S_h}$. It is well known that  the lowest and the largest eigenvalues of $S_h$ are $m_h^2$ and $M_h^2$, respectively. Thus,
\begin{equation}\label{eq.Sh}
m_h\|q_h\| \leq \|q_h\|_{S_h}=(S_h q_h, q_h)^{1/2} \leq M_h  \|q_h\|    \ \Forall  q_h \in \M_h.
\end{equation}
\begin{remark}\label{rem:Sh} On $ \bV_h$, we  consider the same   norm as the norm on $\bV$. The  inner product  $((\cdot, \cdot))$ on $ \bV_h \times \bV_h$ is not the restriction of the inner product $a(\cdot,\cdot)$,   and is used only for  defining the discrete operators  $A_h$ and $B_h^T$. In what follows, we will need in fact only to work with  $A_h^{-1} B_h^T : \M_h \to \bV_h$ and $S_h= B_h A_h^{-1} B_h^T$  which are  independent of the choice of the  inner product  $((\cdot, \cdot))$. Indeed, if $q_h \in \M_h$ is  arbitrary,  then $\bw_h=  A_h^{-1} B_h^T q_h$  is the unique solution  of   the problem 
\[
a( \bw_h, \bv)  = b(\bv, q_h),  \Forall  \bv \in \bVh,
\]
and  $S_h q_h=B_h A_h^{-1} B_h^T q_h = B_h \bw_h $  does not depend on the  inner product $((\cdot, \cdot))$. 
We also note that if $r_h \in \M_h$ is arbitrary and  $\bv_h=  A_h^{-1} B_h^T r_h$, then 
\begin{equation}\label{eq:IPonV-h}
a(\bw_h, \bv_h)= b(\bv_h,q_h) = (B_h \ A_h^{-1} B_h^T r_h, q_h) = (S_h q_h, r_h)= (q_h,r_h)_{S_h}. 
\end{equation}
In particular, we have
\begin{equation}\label{eq:Norm-onV-h}
|\bw_h|^2= a( \bw_h, \bw_h)  =\|q_h\|^2_{S_h}. 
\end{equation}
\end{remark}

Using the Schur complement $S_h$, the  system \eqref{eq:OP-h} can be decoupled to
\begin{equation}\label{eq:OPS-h}
\begin{array}{lcl}
S_h\, p_h & =& B_h A_h^{-1} {\bf f_h}- g_h\\
 \bu_h   &= & A_h^{-1} ({\bf f_h} - B^T  p_h). 
\end{array}
\end{equation}
 
%%%%%%%%%%%%%%%%%%%%%%%%%%%%%%%%%%%%%%%%%%%%%%%%
%333
%%%%%%%%%%%%%%%%%%%%%%%%%%%%%%%%%%%%%%%%%%%%%%%%%
\section{Uzawa, Uzawa Gradient and Uzawa Conjugate Gradient Algorithms}\label{section:UGC}
%%%%%%%%%%%%%%%%%%%%%%%%%%%%%%%%%%%%%%%%%%%%%%%%%

First, we present a unified  variational form of the  Uzawa, the Uzawa gradient, and the Uzawa conjugate gradient  algorithms   for solving the SPP \eqref{eq:a-v-h}. 
The standard  U and  UG algorithms  can be rewritten such that they differ only by the way the relaxation parameter $\alpha$ is chosen. For the Uzawa algorithm, we have to choose  $\alpha=\alpha_0$    a fixed number in the interval $\left (0, \frac{2}{M_h^2} \right )$.  For the UG algorithm,  the parameter $\alpha$  is chosen to impose the orthogonality of consecutive residuals associated with the second equation in \eqref{eq:a-v-h}. 
The first step for Uzawa is identical with the first step of UG. We combine the two algorithm in:
\begin{algorithm} (U-UG) Algorithms \label{alg:U-UG}
\vspace{0.1in}

 {\bf Step 1:}  {\bf Set}  $\bu_0=0 \in \bVh$, $p_0 \in \M_h$, 
{\bf compute}  $\bu_{1} \in  \bVh $,  $q_1 \in \M_h$ by 
\[ 
 \begin{aligned}
&  a( \bu_{1}, \bv) &=& \ ((\bbf_h,\bv))  - b(\bv, p_{0}), \ \Forall  \bv \in \bVh\\
&    (q_1, q)  & =  & \ b(\bu_1 ,q) -(g_h, q) , \ \Forall   q \in \M_h.
\end{aligned}
\]
\vspace{0.1in}

{\bf  Step 2 :} {\bf For $j=1,2,\ldots, $} {\bf  compute}  $\bh_j, \alpha_j,  p_j, \bu_{j+1}, q_{j+1}$  by  
\[ 
 \begin{aligned}
&  {\bf (U-UG1)}  \ \ \ \   & a( \bh_{j}, \bv)  = &  - b(\bv, q_j), \ \bv \in \bVh\\
&  {\bf (U\alpha)}  \ \ \ \   & \alpha_j=&\alpha_0 \ \text{for the Uzawa algorithm or}\\
&  {\bf (UG\alpha)}  \ \ \ \   & \alpha_j=& - \frac{(q_j, q_j)}{b(\bh_j,q_j)} =\frac{(q_j, q_j)}{(q_j,q_j)_{S_h}}, \ \text{for the UG algorithm} \\
&  {\bf  (U-UG2)}  \ \ \ \   & p_{j}= & p_{j-1} + \alpha_j \   q_j \\
&  {\bf (U-UG3)}  \ \ \ \   & \bu_{j+1} = &  \bu_j + \alpha_j\  \bh_j \\
&  {\bf (U-UG4)}  \ \ \ \   & (q_{j+1}, q) = & \ b(\bu_{j+1} ,q) -(g_h, q)  , \ \Forall   q \in \M_h.
\end{aligned}
\]
\end{algorithm}

In the second identity in (UG$\alpha$),  we involved  Remark \ref{rem:Sh} and  (UG1). 
One can slightly modify the UG algorithm to obtain the UCG algorithm, as done in, e.g.,  \cite{braess, verfurtth84}. 

\begin{algorithm} (UCG) Algorithm \label{alg:UCG}
\vspace{0.1in}

 {\bf Step 1:}  {\bf Set}  $\bu_0=0 \in \bVh$, $p_0 \in \M_h$. {\bf Compute}  $\bu_{1} \in  \bVh $,  $q_1, d_1  \in \M_h$ by 
\[ 
 \begin{aligned}
&  a( \bu_{1}, \bv)& = & ((\bbf_h,\bv))  - b(\bv, p_{0}), \ \bv \in \bVh\\
&      (q_1, q)  & =  & \ b(\bu_1 ,q) -(g_h, q) , \ \Forall   q \in \M_h, \ \ d_1:=q_1.
\end{aligned}
\]
\vspace{0.1in}

{\bf  Step 2} {\bf For $j=1,2,\ldots, $}  {\bf  compute} 
 $\bh_j, \alpha_j,  p_j, \bu_{j+1}, q_{j+1}, \beta_j, d_{j+1}$  by  
\[ 
 \begin{aligned}
&  {\bf (UCG1)}  \ \ \ \   & a( \bh_{j}, \bv)  = &  - b(\bv, d_j), \ \bv \in \bVh\\
&  {\bf (UCG\alpha)}  \ \ \ \   & \alpha_j=& - \frac{(q_j, q_j)}{b(\bh_j,q_j)} =\frac{(q_j, q_j)}{(d_j,q_j)_{S_h}}\\
&  {\bf  (UCG2)}  \ \ \ \   & p_{j}= & p_{j-1} + \alpha_j \   d_j \\
&  {\bf (UCG3)}  \ \ \ \   & \bu_{j+1} = &  \bu_j + \alpha_j\  \bh_j \\
&  {\bf (UCG4)}  \ \ \ \   &  (q_{j+1}, q) = & \ b(\bu_{j+1} ,q) -(g_h, q)  , \ \Forall   q \in \M_h \\
& {\bf (UCG\beta)}  \ \ \ \   & \beta_j=&  \frac{(q_{j+1}, q_{j+1})}{(q_j,q_j)} \\
&  {\bf (UCG6)}  \ \ \ \   & d_{j+1}= & q_{j+1} +\beta_j d_j \\
\end{aligned}
\]
\end{algorithm}

\begin{remark}\label{rem:CG4Sh}
It is not difficult to check that the UG and UCG algorithms   produce  the standard gradient and the standard conjugate gradient algorithms for solving the first equation in \eqref{eq:OPS-h}. 

\end{remark} 
\begin{theorem}\label{th:normUp}  Let  $(\bu_{h},p_h)$ be the solution of  
 \eqref{eq:a-v-h}, and let  $\{(\bu_{j+1},p_j)\}_{j \geq 0}$ be the iterations produced by a U, UG, or UCG algorithm. Then, for  $ j \geq 0$, 
\begin{equation}\label{eq:umuj-h}
\bu_{j+1} -\bu_h  =   A_h^{-1} B_h^T(p_h -p_j), 
\end{equation}

\begin{equation}\label{eq:qerrp-h}
q_{j+1}= S_h(p_h-p_{j}), 
\end{equation}
and consequently, for  $ j \geq 1$, 
\begin{equation}\label{eq:qEstimator-ph}
\frac{1} {M_h^2}\,  \|q_j\| \leq \|p_{j-1} -p_h \| \leq  \frac{1} {m_h^2}\,  \|q_j\|. 
\end{equation}

\begin{equation}\label{eq:qEstimator-uh}
\frac{m_h} {M_h^2}\,  \|q_j\| \leq |\bu_{j} -\bu_h | \leq  \frac{M_h} {m_h^2}\,  \|q_j\|,
\end{equation}

\end{theorem}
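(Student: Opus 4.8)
The plan is to establish the four displayed identities in order, since each feeds into the next. First I would prove \eqref{eq:umuj-h}: subtract the first equation of \eqref{eq:a-v-h} from Step~CM1/Step~1, which is already solved exactly on each level, obtaining $a(\bu_{j+1}-\bu_h,\bv) = -b(\bv, p_j - p_h)$ for all $\bv\in\bVh$. By the definition of $A_h$ and $B_h^T$ this says exactly $A_h(\bu_{j+1}-\bu_h) = B_h^T(p_h-p_j)$, hence $\bu_{j+1}-\bu_h = A_h^{-1}B_h^T(p_h-p_j)$. The one point to be careful about is that in all three algorithms (U, UG, UCG) the velocity update together with the $\bh_j$-equation is precisely the exact elliptic solve $a(\bu_{j+1},\bv)=((\bbf_h,\bv))-b(\bv,p_j)$ --- this is the content of steps (U-UG1)/(U-UG2)/(U-UG3) combined, and similarly for UCG --- so \eqref{eq:umuj-h} holds uniformly.

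Next, for \eqref{eq:qerrp-h} I would use the defining relation for $q_{j+1}$, namely $(q_{j+1},q) = b(\bu_{j+1},q) - (g_h,q)$ for all $q\in\M_h$. Since $(\bu_h,p_h)$ solves \eqref{eq:a-v-h}, we have $b(\bu_h,q)=(g_h,q)$, so $(q_{j+1},q) = b(\bu_{j+1}-\bu_h,q) = (B_h(\bu_{j+1}-\bu_h),q)$. Substituting \eqref{eq:umuj-h} gives $(q_{j+1},q) = (B_h A_h^{-1}B_h^T(p_h-p_j),q) = (S_h(p_h-p_j),q)$ for all $q\in\M_h$, whence $q_{j+1}=S_h(p_h-p_j)$. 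Shifting the index, $q_j = S_h(p_h-p_{j-1})$ for $j\ge 1$.

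Then \eqref{eq:qEstimator-ph} follows from the spectral bounds on $S_h$ in \eqref{eq.Sh}. Writing $e_{j-1}:=p_{j-1}-p_h$, we have $q_j = -S_h e_{j-1}$, so $\|q_j\| = \|S_h e_{j-1}\|$. Since $S_h$ is symmetric positive definite with eigenvalues in $[m_h^2, M_h^2]$, so is $S_h$ as a map, giving $m_h^2\|e_{j-1}\| \le \|S_h e_{j-1}\| \le M_h^2 \|e_{j-1}\|$; rearranging yields \eqref{eq:qEstimator-ph}. Finally \eqref{eq:qEstimator-uh}: from \eqref{eq:umuj-h} with index shifted, $\bu_j-\bu_h = A_h^{-1}B_h^T(p_h-p_{j-1}) = -A_h^{-1}B_h^T e_{j-1}$, and by \eqref{eq:Norm-onV-h} of Remark~\ref{rem:Sh}, $|\bu_j-\bu_h|^2 = \|e_{j-1}\|_{S_h}^2 = (S_h e_{j-1}, e_{j-1})$. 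Combining $m_h\|e_{j-1}\|\le \|e_{j-1}\|_{S_h}\le M_h\|e_{j-1}\|$ from \eqref{eq.Sh} with the bounds $\frac{1}{M_h^2}\|q_j\|\le \|e_{j-1}\|\le \frac{1}{m_h^2}\|q_j\|$ from the previous step gives \eqref{eq:qEstimator-uh}.

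I do not anticipate a serious obstacle; the only mildly delicate point is verifying that the combined velocity/pressure updates in each of U, UG, and UCG genuinely reproduce the exact elliptic solve underlying \eqref{eq:umuj-h}, i.e.\ that the recursions are consistent with $a(\bu_{j+1},\bv)=((\bbf_h,\bv))-b(\bv,p_j)$ --- this is essentially Remark~\ref{rem:CG4Sh} but should be spelled out by induction on $j$, using that $\bh_j = A_h^{-1}B_h^T(\text{search direction})$ and $p_j = p_{j-1}+\alpha_j(\cdot)$, $\bu_{j+1}=\bu_j+\alpha_j\bh_j$ move in tandem. Everything else is a direct application of \eqref{eq.Sh} and \eqref{eq:Norm-onV-h}.
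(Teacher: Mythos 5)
Your proposal is correct and follows essentially the same route as the paper: establish $a(\bu_{j+1},\bv)+b(\bv,p_j)=((\bbf_h,\bv))$ by induction, subtract the discrete system to get \eqref{eq:umuj-h} and \eqref{eq:qerrp-h}, and then deduce the two-sided bounds from the spectral bounds \eqref{eq.Sh} on $S_h$ together with $|\bu_j-\bu_h|=\|p_h-p_{j-1}\|_{S_h}$ from Remark \ref{rem:Sh}. The paper likewise leaves the induction unspelled, so no gap relative to its own standard of detail.
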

\begin{proof} By induction over $j$, it is easy to prove (for any of the U, UG, or UCG) that  
\begin{equation}\label{eq:uj-h}
a(\bu_{j+1},\bv) + b(\bv, p_{j}) =  ((\bbf_h,\bv)), \ \Forall  \ \bv \in \bV_h. 
\end{equation}

Combining  the first equation in   \eqref{eq:a-v-h} and  \eqref{eq:uj-h},  we get
\[
a( \bu_{j+1} -\bu_h, \bv)  = b(\bv, p_h -p_j),  \Forall  \bv \in \bVh, 
\]
which  gives  \eqref{eq:umuj-h}.  
From  {\bf (U4), (UG4)}, or {\bf (UCG4)}, the second equation of  \eqref{eq:OP-h}, and \eqref{eq:umuj-h} we get 
\[
q_{j+1} =B_h \bu_{j+1}-  g_h = B_h (\bu_{j+1} -\bu_h)  =S_h(p_h -p_{j}). 
\]
which proves \eqref{eq:qerrp-h}.  As a consequence of  \ref{eq:umuj-h}, the estimate \eqref{eq.Sh},  and Remark \ref{rem:Sh}, for $j \geq 1$, we have
\begin{equation}\label{eq:numuj-hh}
m_h  \|p_h -p_{j-1}\| \leq |\bu_{j} -\bu_h | =   \|p_h -p_{j-1}\|_{S_h} \leq  M_h  \|p_h -p_{j-1}\|.   
\end{equation}
Using \eqref{eq:qerrp-h} and the fact that $m_h^2$ and $M_h^2$ are  the extreme eigenvalues of $S_h$, we get 
\begin{equation}\label{eq:normq-h}
m_h^2  \|p_h -p_{j-1}\| \leq   \|S_h(p_h -p_{j-1})\| =\|q_{j} \|  \leq  {M_h^2}    \|p_h -p_{j-1}\|.
\end{equation}
The estimates  \eqref{eq:qEstimator-ph}  and  \eqref{eq:qEstimator-uh} are a direct consequence of \eqref{eq:numuj-hh}
and \eqref{eq:normq-h}.
\end{proof}
As a consequence of Theorem \ref{th:normUp}, we obtain
\begin{equation}\label{eq:erup-h}
\frac{1+ m_h} {M_h^2}\,  \|q_{j}\| \leq  |\bu_h-\bu_j|  +   \|p_h-p_{j-1}\|  \leq  \frac{1+M_h} {m_h^2}  \|q_{j}\|,  
\end{equation} 
which says  that $\|q_{j}\|$ is  an  estimator   for the global iteration error providing good  upper and lower bounds.  

\begin{theorem}\label{th:U-UG-UCG}
 Let  $(\bu_{h},p_h)$ be the solution of   \eqref{eq:a-v-h}, and let  $\{(\bu_{j+1},p_j)\}_{j \geq 0}$ be the iterations produced by a U, UG or UCG algorithm. Then, \\
   $(\bu_{j+1},p_j) \to (\bu_{h}, p_h)$, and consequently $q_j \to 0$. 
\end{theorem}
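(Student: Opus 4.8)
The plan is to reduce the whole statement to the single assertion that the pressure iterates converge, $p_j \to p_h$ in $\M_h$. This reduction is essentially free: by Theorem \ref{th:normUp}, the estimates \eqref{eq:qEstimator-ph} and \eqref{eq:qEstimator-uh} (equivalently the combined bound \eqref{eq:erup-h}) show that the three quantities $\|q_j\|$, $\|p_{j-1}-p_h\|$ and $|\bu_j-\bu_h|$ are mutually comparable up to the fixed constants $m_h, M_h$; hence it suffices to prove that one of them tends to $0$. Once $\|p_j-p_h\|\to 0$ is known, \eqref{eq:qerrp-h} gives $\|q_{j+1}\| = \|S_h(p_h-p_j)\| \le M_h^2\|p_h-p_j\|\to 0$, and \eqref{eq:umuj-h} together with \eqref{eq:Norm-onV-h} gives $|\bu_{j+1}-\bu_h|^2 = \|p_h-p_j\|^2_{S_h} \le M_h^2\|p_h-p_j\|^2\to 0$, which is exactly the claimed convergence $(\bu_{j+1},p_j)\to(\bu_h,p_h)$ and $q_j\to 0$.

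To prove $p_j\to p_h$ I would use Remark \ref{rem:CG4Sh}: the sequence $\{p_j\}$ generated by the UG (resp.\ UCG) algorithm is precisely the sequence generated by the classical steepest descent (resp.\ conjugate gradient) method applied to the symmetric positive definite system $S_h\,p = B_h A_h^{-1}\bbf_h - g_h$ appearing in \eqref{eq:OPS-h}, whose unique solution is $p_h$. For the Uzawa algorithm one checks directly, from ${\bf (U\text{-}UG2)}$ with $\alpha_j=\alpha_0$ and the identity $q_j = S_h(p_h-p_{j-1})$ (which is \eqref{eq:qerrp-h} shifted by one), that $p_h-p_j = (I-\alpha_0 S_h)(p_h-p_{j-1})$, so $\{p_j\}$ is the Richardson iteration for the same system. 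In the degenerate case $q_j=0$ for some $j$, estimate \eqref{eq:qEstimator-ph} forces $p_{j-1}=p_h$, the iteration has already converged, and $\alpha_j$ need not be formed; so we may assume $q_j\ne 0$ throughout and all iterates are well defined.

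It then remains to invoke the standard convergence theory in each of the three cases, using only $\sigma(S_h)\subset[m_h^2,M_h^2]$ from \eqref{eq.Sh}. For Uzawa with $\alpha_0\in(0,2/M_h^2)$, the iteration operator $I-\alpha_0 S_h$ has spectral radius $\rho = \max\{\,|1-\alpha_0 m_h^2|,\;|1-\alpha_0 M_h^2|\,\} < 1$, so $\|p_h-p_j\|\le \rho^{\,j}\|p_h-p_0\|\to 0$. For UG, the classical Kantorovich estimate for steepest descent on the quadratic $\tfrac12(S_h p,p)-(B_hA_h^{-1}\bbf_h-g_h,p)$ yields $\|p_h-p_j\|_{S_h}\le \bigl(\tfrac{\kappa-1}{\kappa+1}\bigr)^{j}\|p_h-p_0\|_{S_h}$ with $\kappa = M_h^2/m_h^2$, hence $p_j\to p_h$. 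For UCG, the conjugate gradient method reaches the exact solution after at most $\dim\M_h$ steps (and in any event converges with rate $(\sqrt{\kappa}-1)/(\sqrt{\kappa}+1)$), so $p_j = p_h$ for all large $j$. Combining the three cases proves $p_j\to p_h$, and by the first paragraph the theorem follows.

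I do not expect a genuine obstacle here; the proof is an assembly of classical facts. The only points deserving care are the clean identification, already foreshadowed in Remark \ref{rem:CG4Sh}, of the U/UG/UCG pressure iterates with the Richardson/steepest-descent/CG iterates for the Schur complement system, and the bookkeeping of the index shift in \eqref{eq:qerrp-h} that makes the Uzawa case a literal Richardson iteration.
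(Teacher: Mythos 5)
Your proposal is correct and follows essentially the same route as the paper: reduce everything to $p_j\to p_h$, treat Uzawa via the contraction $\|I-\alpha_0 S_h\|=\max\{|1-\alpha_0 m_h^2|,|1-\alpha_0 M_h^2|\}<1$, and treat UG/UCG via Remark \ref{rem:CG4Sh} and the classical Kantorovich-type rate, whose form $\frac{\kappa-1}{\kappa+1}$ with $\kappa=M_h^2/m_h^2$ is exactly the paper's estimate \eqref{eq:pconv-h}. Your extra observations (finite termination of CG, the degenerate case $q_j=0$) are harmless refinements rather than a different argument.
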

\begin{proof} 
For the Uzawa algorithm,  it is easy to check that 
\begin{equation}\label{eq:errp-h}
p_h-p_j= (I -\alpha S_h) (p_h-p_{j-1}). 
\end{equation}
Using  that the  eigenvalues of the symmetric operator $S_h$ are $m_h^2$ and $M_h^2$, we have  
\begin{equation}\label{eq:rateU}
\| I -\alpha S_h\| =\max\{|1-\alpha m_h^2 |, |1-\alpha M_h^2 |\} <1, \  \text{for} \  \alpha \in \left (0, \frac {2} {M_h^2}\right ). 
\end{equation}
Thus, $p_j \to p_h$.  For the UG and UCG, by using   Remark \ref{rem:CG4Sh}, the following estimates  is  well known from \cite{greub-rheinboldt59}, \cite{BacShu13} and others. 
\begin{equation}\label{eq:pconv-h}
 \|p_h-p_j\|_{S_h}  \leq  \frac{{M_h}^2-m_h^2} {{M_h}^2 + m_h^2}\  \|p_h-p_{j-1}\|_{S_h}.  
\end{equation}
The estimate gives $p_j \to p_h$. Using \eqref{eq:umuj-h}, we also get that $\bu_{j+1} \to \bu_h$ for the  U, UG, or UCG algorithm. The fact that $q_j \to 0$  follows from \eqref{eq:qEstimator-ph}. 
\end{proof}
%
%%%%%%%%%%%%%%%%%%%%%%%%%%%%%%%%%%%%%%%%%%%%%%%%
%444
%%%%%%%%%%%%%%%%%%%%%%%%%%%%%%%%%%%%%%%%%%%%%%%%%
\section{Cascadic Algorithm for Saddle Point Problems}\label{section:CM}
%%%%%%%%%%%%%%%%%%%%%%%%%%%%%%%%%%%%%%%%%%%%%%%%%

In this section we will define concrete (CM) algorithms by specifying   the {\it process} of {\bf Step CM2} and by 
defining a level change condition (LC). We will use the notation and the setting of the previous sections.
Assume that  we can easily build a  sequence of pairs $\{({\bV_k}, {\M_k})\}_{k \geq 1}$  that satisfies  a discrete $\inf-\sup$ condition for every $k\geq 1$, and that $h_k$ is a mesh parameter associated with the pair $(\bV_k, \M_k)$ such that $h_k \to 0$. We define 
\begin{equation}
\label{inf-sup-m-k}
m_k:=\du{\inf}{p_k \in \M_k}{} \ \du {\sup} {\bv_k \in \bV_k}{} \ \frac {b(\bv_k, p_k)}{\|p_k\| \ |\bv_k|}. 
\end{equation} 
and 
\begin{equation}\label{sup-sup-Mk}
M_k:=\du{\sup}{p_k \in \M_k}{} \ \du {\sup} {\bv_k \in \bV_k}{} \ \frac {b(\bv_k, p_k)}{\|p_k\| \ 
|\bv_k|} \leq M.
\end{equation} 

  In order to prove the convergence  of  Algorithm \ref{alg:CM}  we further introduce the following  assumptions:
 \begin{itemize}
 
\item [{$\bf (A_1)$}] The family  $\{(\bV_k, \M_k)\}_{k\geq 1}$ is stable: \\
 There exists $m_0>0$ such that $m_k\geq m_0$, for $k=1,2,\cdots$.
\vspace{0.1in}

\item  [{$\bf (A_2)$}] The  {\it process}  of  {\bf Step CM2}  is defined by  {\bf Step 2} of U, UG or UCG algorithm. 
In the Uzawa solver case, we take $\alpha_0 \in \left (0, 2/M^2\right )$. 
\vspace{0.1in}

\item [{$\bf (A_3)$}]   If $(\bu, p)$ is the  solution  \eqref{abstract:variational}, and $(\bu^{(k)}, p^{(k)})$ the solution of \eqref{eq:a-v-h} on $({\bV_k}, {\M_k})$, then there exist 
$C_0 =C_0(\bu,p)$ and $s>0$  independent of $k$, such that
 \begin{equation}\label{eq:appox-k}
|\bu-\bu^{(k)}| +\|p-p^{(k)} \| \le C_0\,  h_k ^s.
\end{equation}
\vspace{0.1in}

\item [{$\bf (A_4)$}]  The {\it level change} condition is 
 \begin{equation*}\label{eq:stopC}
\text{(LC)} \ \ \ \ \ \ \ \  \ \|q_{j+1}\| \leq  C_{lc}\  h_k^s, 
\end{equation*}
where $C_{lc}$ is a constant independent of $k$.
\end{itemize}
 We further consider that  a sequence of  prolongation operators \\  $ \P_{k,k+1} : \M_k \to \M_{k+1}$ is available.  We are  ready now to state our main result:

\begin{theorem}\label{th:CMconvergence}
\bigskip
Assume that  {$\bf (A_1)-(A_4)$}  are satisfied. If $(\bu_{j+1},p_{j})$ is the last iteration computed by the CM algorithm  on $(\bV_k, \M_k)$, then there exists a constant $C$ depending only on $m_0, M, C_0, C_{lc} $,  and $\alpha_0$ in 
the Uzawa level solver case, such that 
 \begin{equation}\label{eq:CONV-j}
  |\bu -\bu_{j+1}|  +  \|p-p_{j}\|  \le C h_k^s.
\end{equation}

\end{theorem}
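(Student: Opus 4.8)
The plan is to use the triangle inequality to split the error at the final iterate on level $k$ into the discretization error and the iteration error, and then to bound the iteration error using the estimator $\|q_{j+1}\|$ together with the level change condition. First I would write, for the last iteration $(\bu_{j+1},p_j)$ on level $k$,
\[
|\bu-\bu_{j+1}| + \|p-p_j\| \le \bigl(|\bu-\bu^{(k)}| + \|p-p^{(k)}\|\bigr) + \bigl(|\bu^{(k)}-\bu_{j+1}| + \|p^{(k)}-p_j\|\bigr).
\]
By assumption $\bf (A_3)$ the first parenthesis is bounded by $C_0 h_k^s$. For the second, note that on level $k$ the pair $(\bu^{(k)},p^{(k)})$ plays the role of $(\bu_h,p_h)$ in Theorem \ref{th:normUp}, so by \eqref{eq:erup-h} (with $m_h=m_k$, $M_h=M_k$),
\[
|\bu^{(k)}-\bu_{j+1}| + \|p^{(k)}-p_j\| \le \frac{1+M_k}{m_k^2}\,\|q_{j+1}\| \le \frac{1+M}{m_0^2}\,\|q_{j+1}\|,
\]
where the last step uses $\bf (A_1)$ ($m_k\ge m_0$) and $M_k\le M$ from \eqref{sup-sup-Mk}. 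Finally the level change condition $\bf (A_4)$ gives $\|q_{j+1}\|\le C_{lc} h_k^s$, and combining the three bounds yields \eqref{eq:CONV-j} with $C = C_0 + \frac{(1+M)C_{lc}}{m_0^2}$.

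There is, however, one subtlety to address before this argument is complete: the estimates of Theorem \ref{th:normUp}, in particular \eqref{eq:erup-h}, were stated for the iterations of a U/UG/UCG run initialized with $\bu_0=0$, whereas in the CM algorithm the run on level $k$ (for $k\ge 2$) is initialized from the prolongated pressure $p^{(k)}_0 = \P_{k,k+1}(p_j^{\text{old}})$ and a matching $\bu_1$ recomputed via Step CM1. So I would first check that Theorem \ref{th:normUp} applies verbatim on each level regardless of the initial guess $p_0\in\M_k$: indeed, the only property used in its proof is the invariant \eqref{eq:uj-h}, $a(\bu_{j+1},\bv)+b(\bv,p_j)=((\bbf_h,\bv))$, which Step CM1 establishes for $j=1$ (it is exactly the first equation solved there, with $p_0 = p^{(k)}_0$), and which Step CM2 (U-UG3/U-UG2 or UCG3/UCG2) then propagates by induction in $j$. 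Hence \eqref{eq:qerrp-h}, \eqref{eq:qEstimator-ph}, \eqref{eq:qEstimator-uh} and \eqref{eq:erup-h} hold on level $k$ with $(\bu_h,p_h)$ replaced by $(\bu^{(k)},p^{(k)})$, for any starting $p_0$. This makes the main estimate above rigorous.

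The only genuine obstacle, and the point that deserves the most care, is a \emph{well-posedness of the level change criterion} issue that is implicit in the statement: one must know that on every level the (LC) condition $\|q_{j+1}\|\le C_{lc}h_k^s$ is eventually satisfied, so that "the last iteration on $(\bV_k,\M_k)$" is a meaningful object and the algorithm actually advances. This follows from Theorem \ref{th:U-UG-UCG}, which guarantees $q_j\to 0$ for each of the U, UG, UCG solvers on a fixed level; since $h_k^s>0$, the threshold is crossed after finitely many iterations. I would state this explicitly (perhaps as a preliminary remark inside the proof) so that the subsequent triangle-inequality computation is not vacuous. Everything else in the proof is a one-line application of already-established estimates, so beyond these two bookkeeping points the argument is routine.
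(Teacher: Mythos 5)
Your proposal is correct and follows essentially the same route as the paper: a triangle inequality splitting into discretization error (bounded via $\bf (A_3)$) and iteration error (bounded via \eqref{eq:erup-h} with $m_k\ge m_0$, $M_k\le M$, and then $\bf (A_4)$), together with the observation that Theorem \ref{th:U-UG-UCG} guarantees the (LC) condition is met after finitely many iterations on each level. Your two ``bookkeeping'' checks (that \eqref{eq:erup-h} holds for an arbitrary initial $p_0$, and that the last iterate is well defined) are exactly the points the paper relies on, the second being stated explicitly there as well.
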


\begin{proof} From \eqref{eq:appox-k}  and \eqref{sup-sup-Mk}, we have that $m_0\leq m_k \leq M_k \leq M$, for $k=1,2,\cdots $. Thus, from Theorem \ref{th:normUp} or the equation  \eqref{eq:erup-h}  we get that 
\[
|\bu_{j+1}-\bu^{(k)}| +\|p_{j}-p^{(k)} \| \le C_1  \|q_{j+1}\|,
\]
with $C_1$ depending only on $m_0$ and $M$ (and $\alpha_0$ in the U-case). 
If $(\bu_{j+1},p_{j})$ is the last iteration computed by the CM algorithm  on $(\bV_k, \M_k)$, then  by assumption  {$\bf (A_4)$}, we have $ \|q_{j+1}\| \leq    C_{lc}\, h_k^s$, and consequently,  

 \begin{equation}\label{eq:appox-k}
|\bu_{j+1}-\bu^{(k)}| +\|p_{j}-p^{(k)} \| \le  C_1  C_{lc}\,  h_k ^s.
\end{equation}
We note here that, due to Theorem \ref{th:U-UG-UCG},  if  infinitely many iterates would be performed on  a fixed level, then we had  that $ \|q_{j+1}\| \to 0$, which contradicts the level change assumption {$\bf (A_4)$}. Consequently,  on each level  the algorithms perform a finite number of iterations.
The convergence estimate \eqref{eq:CONV-j} is a direct consequence of {$\bf (A_3)$}, \eqref{eq:appox-k}, and the triangle inequality.  
\end{proof}

\begin{remark} We did not use any assumption on the  prolongation operators  $ \P_{k,k+1} : \M_k  \to \M_{k+1}$. Nevertheless, from Theorem \ref{th:Uconv} and Theorem \ref{th:UG-UCG}, under the stability assumption {$\bf (A_1)$},  we can conclude that  on each fixed level $k$  the reduction error for consecutive steps can be bounded by a factor $\rho \in(0,1)$  independent of $k$. Thus, under some natural assumption on  the prolongation $ \P_{k,k+1}$ 
such us
\[
\| \P_{k,k+1}\,  p -p\| \leq C_2\,  h_k ^s, \ \Forall p\in \M_k,
\]
it is easy to prove that  the CM algorithm converges with 
the approximation order of \eqref{eq:CONV-j}, and in addition, the number of iterations on each level is bounded by a fixed number   $N_{maxit}$  independent of level $k$. %For example, for the Stokes system on convex domains, $N_{max}$ could be as low as $4$. 
\end{remark}

 Thus, if the number of iteration (or the amount of work) of the  CM algorithm is associated with a  water cascade flow with  the steps corresponding to our multilevel spaces, we can claim that the flow { does not spread out}. This makes our proposed algorithm a {cascadic non-spreading} iteration process.   
The {\it non-spreading cascadic}  phenomena can be also ``watched'' on  the last column of  Table \ref{table:1}-Table \ref{table:4}.   In what follows,  the  {\bf CM} algorithm  with {\bf  U, UG}, or  {\bf UCG}  as level solver  defines  the corresponding  {\bf CMU, CMUG}, or {\bf CMUCG} algorithm. 

%%%%%%%%%%%%%%%%%%%%%%%%%%%%%%%%%%%%%%%%%%%%%%%%
%555
%%%%%%%%%%%%%%%%%%%%%%%%%%%%%%%%%%%%%%%%%%%%%%%%
\section{Numerical results for the Stokes system} \label{section:NR-Stokes}
%%%%%%%%%%%%%%%%%%%%%%%%%%%%%%%%%%%%%%%%%%%%%%%%

In this section, we show the numerical performance of the CM algorithm, emphasizing on the way one should 
choose the level change criterion once information about the order of the discretization error is available. 
We implemented the {\bf CMU, CMUG}, and {\bf CMUCG} algorithms  for the  discretization of  the standard Stokes system. For each level $k$  we record the errors   $|\bu -\bu_{j+1}| $  and   $\|p-p_{j}\|$ where  $(\bu_{j+1},p_{j})$ is the last iteration computed on $(\bV_k, \M_k)$. 

First, we considered $\Omega$ to be  the unit square $(0, 1)^2$ and defined the data for the Stokes system, such that  the exact solution is $p=2/3-x^2 -y^2$ and  $u_1=u_2 =1/{2\pi^2}\,  \sin(\pi x) \sin(\pi y)$. We discretize using   two  known stable families of pairs: $P_2-P_0$,  and ($P_2-P_1$) -Taylor-Hood (T-H). To construct the  spaces  $(\bV_k, \M_k )$,  we  define  the original triangulation $\T_1$ on $\Omega$  given by  the Union Jack pattern. The family of uniform meshes $\{\T_k\}_{k \geq 1}$ is defined by a uniform refinement strategy, i.e., $\T_{k+1}$ is obtained from $\T_{k} $ by splitting each triangle of   $\T_{k} $ in four similar triangles.  
%For the velocity  discretization, we took  $\bV_k$ to be  the space of vector functions which  vanish on $\partial \d$ and are continuous piecewise quadratic functions with  respect to the mesh $\T_k$. For the  $P_2-P_0$ discretization, we have  $\M_k $ to be  the space  of discontinuous  piecewise constant scalar functions, and for the  T-H discretization we take   $\M_k $ to be the space  of continuous piecewise linear scalar functions  with  respect to the mesh $\T_k$.  
Since the sequence of spaces  $\{ \M_k \}$ is nested, in both   $P_2-P_0$ and T-H discretizations, the  prolongation operators  $ \P_{k,k+1} : \M_k \to \M_{k+1}$  are simply the embedding operators. 

 For  the   $P_2-P_0$ element,  the discretization error is  $O(h) $, so we use the  
{\it  level change  condition:  (LC) } $  \|q_{j+1}\| \leq \frac 1{16}  {h_k}$. For comparison between {\bf CMU, CMUG}, and {\bf CMUCG}, see Table \ref{table:1}. 
%We mention that, when the three algorithms are imposed to  iterate only on  the last level ($k=8$) and  use exactly  the same stopping criterion,  we obtained similar errors, but using more iterations: $25, 20$, and $11$ for  {\bf CMU, CMUG}, and {\bf CMUCG}, respectively. 

 \begin{center}
\begin{table} [!ht]
\begin{tabular}{|l|c|c|c|c|c|} 
\hline  
       {\bf CMU, $\alpha=0.8$}& {$|\bu-\bu_{j+1}|$}& {Rate} & {$\|p-p_j\|$}& {Rate}  & {\# of iter}\\ 
\hline         
         k=4    & 0.0384038 & & Ê0.0434820 &  & 16 \\        
  \hline     
         k=5    &0.0200707 & 0.95 &  0.0264921 &0.72 & 8 \\
 \hline     
        k=6    & 0.0103764 &0.97 &0.0156126 & 0.76 & 10   \\
  \hline
         k=7   &0.0053116 &0.96 & 0.0086539 &0.85& 11 \\      
 \hline
         k=8   &{0.0026943} &{0.98}&{0.0045779} &{0.92}& 11 \\       
\hline     
       {\bf CMUG } & {}& {} & {}  & {} & {} \\
\hline         
         k=4    &0.0386718 & &0.0467490&& 13\\        
  \hline     
         k=5    &0.0201762 &0.94 & 0.027242 &0.78 &6 \\
 \hline     
        k=6    &Ê0.0103487 & 0.96 & 0.0150688 &0.86 & 6  \\
  \hline
         k=7   &0.0052530 &0.98  &0.0079920 &0.91& 5\\      
 \hline
         k=8   &{0.0026532} &{0.99} & {0.0041757}& {0.94}& 5 \\        
\hline
       {\bf CMUCG } & {}& {} & {}  & {} & {} \\
\hline         
         k=4     & 0.0415028  &  & 0.0732229  & & 9\\        
  \hline     
         k=5    &0.0213266 &0.96&  0.0373755 &0.97& 3 \\
 \hline     
        k=6     &0.0107704  &0.98 & 0.0188775 & 0.98 & 4  \\
  \hline
         k=7   &0.0054313 & 0.99 &0.0047840 &0.99 & 3 \\      
\hline
         k=8   &{0.0027269} &{0.99}&{0.0045623}&{0.99}& 3\\      
\hline
\end{tabular}
 \caption{{\bf CM} $P_2-P_0$ discretization  with LC:\  $  \|q_{j+1}\| < 1/16 \ h_k$}
 \label{table:1}
\end{table}
\end{center}

  \begin{center}
\begin{table} [!ht]
\begin{tabular}{|l|c|c|c|c|c|} 
\hline  
       {\bf CMU, $\alpha=1$}& {$|\bu-\bu_{j+1}|$}& {Rate} & {$\|p-p_j\|$}& {Rate}  & {\# of iter}\\ 
\hline         
         k=4    & 0.0008450 & & 0.0009184 &  & 23\\        
  \hline     
         k=5    &0.0002081& 2.02 &  0.0002208 &2.05 & 6 \\
 \hline      
        k=6    &0.0000517 &1.83 &0.0000546 & 2.02 & 6   \\
  \hline
         k=7   & 0.0000129 &2.00 &0.0000138 &1.98& 6 \\      
 \hline
         k=8   &{0.0000032} &{1.98}&{0.0000035} &{1.96}& 6 \\       
\hline     
       {\bf CMUG } & {}& {} & {}  & {} & {} \\
\hline         
         k=4    & 0.0007491  & &0.0006847&& 14\\        
  \hline     
         k=5    & 0.0001770  &2.08 & 0.0001121 &2.61 &4 \\
 \hline     
        k=6    & 0.0000442 & 2.00 &0.0000278 &2.01 & 2  \\
  \hline
         k=7   &0.0000110 &2.00  &0.0000071 &1.97& 2\\      
 \hline
         k=8   &{0.0000027} &{2.00} & {0.0000018}& {1.97}& 2 \\        
\hline
       {\bf CMUCG } & {}& {} & {}  & {} & {} \\
\hline         
         k=4     & 0.0008260  &  & 0.00087068  & & 7\\        
  \hline     
         k=5    &0.0001757 &2.23& 0.0001054 &3.04& 2 \\
 \hline     
        k=6     & 0.0000438  &2.00 & 0.0000259 & 2.02 & 2  \\
  \hline
         k=7   & 0.0000109 & 2.00 & 0.0000065 &2.00 & 2 \\      
\hline
         k=8   &{0.0000027} &{2.00}&{0.0000016}&{1.99}& 2\\      
\hline
\end{tabular}
 \caption{{\bf CM}  T-H discretization  with LC: $  \|q_{j+1}\| < 1/16 \ h_k^2$}
 \label{table:2}
\end{table}
\end{center}
For  the   {Taylor-Hood} element,  the discretization error is  $O(h^2) $, so we use the  
{\it level change  condition:  (LC) } $  \|q_{j+1}\|  \leq \frac 1{16} h_k^2$. 
For a comparison between {\bf CMU, CMUG}, and {\bf CMUCG}, see Table \ref{table:2}. When we impose the iteration  only on the last level ($k=8$) using the same stopping criterion,  we obtain similar errors, by using  $45, 29$, and $20$ iterations for  {\bf CMU, CMUG}, and {\bf CMUCG}, respectively.  For the convex case, it seems that   {\bf CMUCG} has only a slightly better performance when compared with  {\bf CMUG}. Nevertheless, the advantage of  {\bf CMUCG}  is more significant  in the non-convex case. 

Secondly, we performed  numerical experiments  for  the Stokes system on the  $L$-shaped domain $\Omega:=(-1, 1)^2 \setminus [0, 1]\times [-1, 0] $ using the ($P_2-P_1$)-T-H discretization. We chose the data such that the exact solution is  $u_1=u_2=r^{2/3}\sin(\frac{2}{3}\theta) (1-x^2)(1-y^2)$, and  $p=2/3-x^2 -y^2$.  Note that $u_1 \notin H^{1+ 2/3}$.  
We used  {\it quasi-uniform} meshes  and  {\it graded meshes} also. For both types of  refinement, we started with the initial triangulation $\T_1$  being the Union Jack pattern on each of the three unit squares of the domain. For the uniform refinement case, the family of quasi-uniform meshes $\{\T_k\}_{k \geq 1}$ is defined  by a uniform refinement strategy as in the convex case. For the graded meshes  $\T_{k+1}$ is obtained from $\T_{k} $ by splitting each triangle of   $\T_{k} $ in four  triangles  as follows: we refine   by  dividing all  the edges  that contain the singular point $(0, 0)$  under a fixed ratio $\kappa>0$ such that the segment containing the singular point is $\kappa$-times the other segment, (see e.g., \cite{BNZ1,BNZ2}).  
We used   $N_k=N_{d.o.f}$  as the complexity measure on  $(\bV_k,  \M_k)$, where $N_{d.o.f}$ is  the number of degrees of freedom associated with a scalar discrete Laplacian on $\bV_k$. For the uniform refinement,  the discretization error is $O(N_k^{-1/3}) $, so we used the  
{\it  level change  condition:  (LC) } $  \|q_{j+1}\| \leq \frac 1{8}  {N_k^{-1/3}}$. The performances of  {\bf CMUG}, and {\bf CMUCG} are similar, see Table \ref{table:3}.
For  the graded meshes refinement, we experimented with various values of $\kappa<1$ in order  to approach  the optimal order of convergence $O(N_k^{-1})$ exhibited in the convex case, and used the {\it stopping criterion}: 
$  \|q_{j+1}\| \le \frac{1} {8} \, N_k^{-1} $. For the comparison between {\bf CMUG}, and {\bf CMUCG}, see Table \ref{table:4}.

  \begin{center}
\begin{table} [!ht]
\begin{tabular}{|l|c|c|c|c|c|} 
\hline  
       {\bf CMUG, $\kappa=1$}& {$|\bu-\bu_{j+1}|$}& {Rate} & {$\|p-p_j\|$}& {Rate}  & {\# of iter}\\ 
\hline         
         k=4    & 0.0509160 & &0.0157147 &  & 7\\        
  \hline     
         k=5    &0.0320362& 0.67 &  0.0093171 &0.75 & 2 \\
 \hline      
        k=6    &0.020178 & 0.67 & 0.0060782 & 0.62 & 2  \\
  \hline
         k=7   & 0.0127138 & 0.67&0.0038458 &0.66& 2 \\      
 \hline
         k=8   &{0.0080074} &{0.67}&{0.0023277} &{0.72}& 2\\       
\hline     
      {\bf CMUCG, $\kappa=1$}  & {}& {} & {}  & {} & {} \\
\hline         
         k=4    & 0.0518455 & &0.0219035 && 4\\        
  \hline     
         k=5    &0.0320592  &0.69 &0.0092441  &1.24 & 2 \\
 \hline     
        k=6    &0.0127075 & 0.67 & 0.0056854 &0.70 & 2  \\
  \hline
         k=7   & 0.0126586 &0.67  & 0.0036609 &0.64 & 2 \\      
 \hline
         k=8   &{0.0080042} &{0.67} & {0.0022305}& {0.71}& 2 \\        
\hline
\end{tabular}
 \caption{{\bf CM}  T-H on uniform refinement, LC: $  \|q_{j+1}\| < 1/8 \ N^{-1/3}_{d.o.f}$}
 \label{table:3}
\end{table}
\end{center}

  \begin{center}
\begin{table} [!ht]
\begin{tabular}{|l|c|c|c|c|c|} 
\hline  
       {\bf CMUG, $\kappa=1/8$}& {$|\bu-\bu_{j+1}|$}& {Rate} & {$\|p-p_j\|$}& {Rate}  & {\# of iter}\\      
\hline         
         k=4    & 0.0185020 & &0.0052936 && 13\\        
  \hline     
         k=5    & 0.0062828  &1.56 &Ê0.0015040 &1.81 & 7 \\
 \hline     
        k=6    &0.0019275 & 1.70 & 0.0003630 &2.05 & 9  \\
  \hline
         k=7   &  0.0005533 &1.80  &Ê0.0000852 &2.09 & 9 \\      
 \hline
         k=8   &{0.0001524} &{1.86} & {0.0000204}& {2.06}& 8 \\  
\hline
         k=9   &{0.0000409} &{1.90} & {0.0000048}& {2.07}& 8 \\            
\hline
       {\bf CMUCG, $\kappa=1/8$} & {}& {} & {}  & {} & {} \\
\hline         
         k=4     & 0.0185008 &  & 0.0052371 & & 7\\        
  \hline     
         k=5    &0.0062794 &1.56 &0.0014294 &1.87& 4 \\
 \hline     
        k=6     & 0.0019272  &1.70 &0.0003316  & 2.10 & 5  \\
  \hline
         k=7   & 0.0005533 & 1.80 & 0.0000822 &2.01 & 3 \\      
\hline
         k=8   &{0.0001524} &{1.86}&{0.0000191}&{2.10}& 4\\   
\hline
         k=9   &{Ê0.0000409} &{1.90}&{0.0000045}&{2.15}& 6\\                
\hline
\end{tabular}
 \caption{{\bf CM}  T-H  on graded meshes   with LC: $  \|q_{j+1}\| < 1/8 \ N^{-1}_{d.o.f}$}
 \label{table:4}
\end{table}
\end{center}

We note that  by choosing  graded meshes and an appropriate level change condition,  we can  improve  the rate of convergence of the CM algorithm. Both CMUG and  CMUCG  recover  a   {\it better than expected}  rate of convergence  for the pressure  and a {\it close to optimal} rate of convergence for the  velocity. The {\it optimal} choice  of the running parameters  $C_{lc}$ and $\kappa$, together with  finding quasi-optimal approximation spaces for Stokes and other SPPs on polygonal or polyhedral domains  are challenging problems that will be further  investigated. 
  
%%%%%%%%%%%%%%%%%%%%%%%%%%%%%%%%%%%%%%%%%%%%%%%%%%
%6666
%%%%%%%%%%%%%%%%%%%%%%%%%%%%%%%%%%%%%%%%%%%%%%%%%%%
\section{Conclusion} \label{section:conclusion} 
%%%%%%%%%%%%%%%%%%%%%%%%%%%%%%%%%%%%%%%%%%%%%%%%%%%%%

 We presented cascadic type  algorithms for discretizing  saddle point problems for the particular case  when the form $a(\cdot, \cdot)$ is symmetric and coercive. The new algorithms  are based on  the existence of multilevel sequences of nested approximation spaces that are stable.  
  We focused on cascadic multilevel algorithms of Schur complement type  with Uzawa, 
Uzawa gradient and the Uzawa conjugate gradient as level solvers.
 The level change criterion requires that  the iteration error be close to the expected discretization error, and we enforced it  by using  an efficient and  easy to compute  residual estimators for  the  iteration error. The theoretical results and experiments show that the algorithms can achieve optimal or  close to optimal  approximation rates by performing a  non-increasing  number of  iterations on each level. 
The  main computational challenge for each  iteration  is inverting  operators  of discrete  Laplacian type. If we  efficiently  invert these operators,  we obtain a  significant  reduction of the overall running time as we compare our CM  algorithm with other  non-multilevel  iterative methods. The algorithms can be applied to a large class of first order systems of  PDEs that can be reformulated at the continuous level as  symmetric SPPs, such as 
 the  $\div$-$\curl$ system and the  Maxwell equations (see  \cite{BM12,BPls03, BKPls05}).

\thanks{The  author would like to thank Lu Shu for the help with some of the numerical experiments.}
%%%%%%%%%%%%%%%%%%%%%%%%%%%%%%%%%%%%%%%%%%%
%\begin{thebibliography}{00}
%\begin{thebibliography}{99}
%\bibliographystyle{plain}  
%\bibliography{NSF12}

\end{document}